\newtheorem{Th}{Theorem}[section]
\newtheorem{Prop}[Th]{Proposition}
\newtheorem{Lem}[Th]{Lemma}
\newtheorem{Rem}[Th]{Remark}
\newcommand{\R}{\mathbb{R}}
\newcommand{\T}{\mathbb{T}}
\newcommand{\Rn}{\mathbb{R}^{n}}
\newcommand{\Z}{\mathbb{Z}}
\renewcommand{\d}{\partial}
\newcommand{\Ss}{\mathcal{S}}
\newcommand{\PP}{\mathcal{P}}
\newcommand{\SA}{\Ss_A(\Rn)}
\newcommand{\jap}{\langle \xi \rangle}
\newcommand{\brkt}[1]{\left({#1}\right)}
\newcommand{\eps}{\varepsilon}
\DeclareMathOperator{\supp}{supp}
\title[Transference of local to global maximal estimates]
{Transference of local to global $L^2$ maximal estimates for dispersive Partial differential equations }
\author[A. J. Castro]{Alejandro J. Castro}
\author[S. Rodr\'iguez-L\'opez]{Salvador Rodr\'iguez-L\'opez}
\author[W. Staubach]{Wolfgang Staubach}
\address{\newline
       Alejandro J. Castro \newline
       Department of  Mathematics, Nazarbayev University, \newline
		010000 Astana, Kazakhstan}
\email{alejandro.castilla@nu.edu.kz}
\address{\newline
       Salvador Rodr\'iguez-L\'opez \newline
       Department of Mathematics, Stockholm University,\newline
       SE - 106 91 Stockholm, Sweden}
       \email{s.rodriguez-lopez@math.su.se }
\address{\newline
       Wolfgang Staubach \newline
       Department of  Mathematics, Uppsala University, \newline
       S-751 06 Uppsala, Sweden}
\email{wulf@math.uu.se}
\thanks{
The first author is supported by Nazarbayev University Social Policy Grant and by the Spanish Government grant MTM2016-79436-P.
The second author is partially supported by the Spanish Government grant MTM2016-75196-P.
The third author is partially supported by a grant from the Crafoord foundation and by a grant from G. S. Magnusons fond, grant number MG2015-0077}
\keywords{oscillatory integrals, maximal-function estimates, dispersive equations, Schr\"odinger equation}
\subjclass[2010]{Primary: 42B20, 47D06, Secondary: 35S30, 35L05}
\begin{document}


\maketitle

\begin{abstract}
In this paper we give an elementary proof for transference of local to global maximal estimates for dispersive PDEs. This is done by transferring local $L^2$ estimates for certain oscillatory integrals  with rough phase functions, to the corresponding global estimates. The elementary feature of our approach is that it entirely avoids the use of the wave packet techniques which are quite common in this context, and instead is based on scalings and classical oscillatory integral estimates.
\end{abstract}

\section{Introduction}

In the study of the Cauchy problem
\begin{equation}\label{the pde}
\begin{cases}
i\partial_t u(t,x) +\phi(D) u (t,x)=0,\\
u(0,x)= u_0 (x) \in H^{s},\,\,\, \mathrm{for}\,\,\, {s>0},
\end{cases}
\end{equation}
for dispersive equations, oscillatory integral operators of the form
\begin{equation*}
T_t f(x)= \int_{\R^n}  e^{ix\cdot \xi + it\phi(\xi)} \hat{f}(\xi)\, d\xi,
\end{equation*}
 play a crucial role. Here $\phi$ is a positively  homogeneous phase function of degree $a$ that satisfies $|\partial^{\alpha} \phi(\xi)|
	\lesssim |\xi|^{a-|\alpha|}$ outside the origin and $\widehat{\phi(D) u}(t,\xi)= \phi(\xi) \widehat{u}(t,\xi).$ We denote by $H^s$ the usual $L^2$-based Sobolev spaces.\\

In the theory of dispersive partial differential equations it is a classical fact that a local maximal function estimate of the type
\begin{equation}\label{head local maximal estimate}
\Vert \sup_{0< t< 1} |T_t f| \, \Vert_{L^2 (B(0,1))}\leq C\Vert f\Vert_{H^s(\R^n)},
\end{equation}
would imply that the solution $u(x,t)$ of \eqref{the pde} (if it exists) converges pointwise almost everywhere to $u_0$ as $t\to 0$.
The global counterpart of \eqref{head local maximal estimate} i.e.
\begin{equation}\label{main global maximal estimate}
\Vert \sup_{0 <  t <  1} |T_tf| \, \Vert_{L^2 (\R^n)}\leq C\Vert f\Vert_{H^s (\R^n)},
\end{equation}
is also important for the study of the well-posedness of the Cauchy problem \eqref{the pde}.\\


It has been a considerable amount of activity regarding the validity of \eqref{head local maximal estimate} and \eqref{main global maximal estimate} for various dispersive equations. For example one should mention the works of M. Cowling \cite{Cowling}, B. Walther \cite{Walther} in the case of $\phi(\xi)=|\xi|$ (i.e. the wave operator $e^{it\sqrt{-\Delta}}$), papers by P. Sj\"olin \cite{Sjolin0}, \cite{Sjolin1}, \cite{Sjolin2},  \cite{Sjolin-1} concerning $\phi(\xi)=|\xi|^a$, with $a>1$, and the papers by  L. Carleson \cite{Carleson}, L. Vega \cite{Vega1}, S. Lee \cite{Lee} and J. Bourgain \cite{Bourgain}  concerning $\phi(\xi)=|\xi|^2$ (i.e. the Schr\"odinger operator $e^{it\Delta}$). We should also mention the recent result of X. Du, L. Guth and X. Li in \cite{Guth}, where they establish the estimate \eqref{head local maximal estimate} in the range $s>1/3$ for the Schr\"odinger maximal operator in dimension 2. According to a result of Bourgain \cite{Bourgain2}, for the Schr\"odinger operator in $n$ dimensions, \eqref{head local maximal estimate} can be valid only if $s\geq\frac{n}{2(n+1)}$, and so the aforementioned result in \cite{Guth} is sharp up to the end point. For the oscillatory integrals with $\phi(\xi)=\xi^3$,  C. Kenig, G. Ponce, and L. Vega \cite{Kenig_Ponce_Vega}, in connection to their seminal work on Korteweg-de Vries equations, 
established estimates of the form \eqref{main global maximal estimate} for $s>\frac{3}{4}$.\\

In \cite{Rog}, K. Rogers showed that in fact the local and global estimates \eqref{head local maximal estimate} and \eqref{main global maximal estimate} are equivalent in the following precise sense: if \eqref{head local maximal estimate} is valid for $s>s_0$ then \eqref{main global maximal estimate} is also valid for $s>as_0$, and vice-versa. The methods used in proving this result were based on a wave-packet analysis, which in a slightly different shape were used in Lee \cite{Lee} and T. Tao \cite{Tao} (for the Schr\"odinger maximal operators), and which ultimately stems from T. Wolff's paper \cite{Wolff}.\\

In this paper, we confine ourselves to the implication local to global and show that in this case, one can prove this, just using elementary methods based on simple scalings and classical estimates for oscillatory integrals. Thus no tools from the technical machinery of the wave-packet analysis are used.\\ 

Our main result is that the validity of \eqref{head local maximal estimate} for $s>s_0$ yields the validity of \eqref{main global maximal estimate} for $s>as_0$, for oscillatory integrals $T_t$ with $\phi$  positively homogeneous of degree $a$ with $a \geq 1$ (i.e. $\phi(r\xi)=r^a \phi(\xi)$, $r>0$), and satisfying
\begin{equation*} 
|\partial^{\alpha} \phi(\xi)|
	\lesssim |\xi|^{a-|\alpha|}, \quad \xi\in\R^n \setminus\{0\}\,\,\, \mathrm{and\,\,\, all\,\,\,  multi-indices}\,\,\, \alpha,
\end{equation*}
and
\begin{equation*}
\min_{|\xi|=1} |\nabla\phi(\xi)|>0.
\end{equation*}

Moreover this result is achieved via rather elementary means. Here it is important to mention that we actually manage to obtain endpoint results at all steps of the proof except the very last one, i.e. in the proof of Proposition \ref{LINLem:G_A<=>G}, which is the source of the \lq\lq $\varepsilon$-loss\rq\rq\, in the final conclusion.  However, we believe that removing the $\varepsilon$ behoves one to use other more advanced methods that won't fall into the scope of an elementary proof. \\

The paper is essentially self-contained and is organised as follows. In Section \ref{Sect:LIN} we use the Kolmogorov-Seliverstov-Plessner stopping time argument to \lq\lq linearise\rq\rq\, the problem and  show  in Theorem \ref{LINTh:L<=>G for phi} that local estimates yield global ones. The proof of Theorem \ref{LINTh:L<=>G for phi} is in turn divided into three propositions (Propositions \ref{LINLem:L<=>L_A}, \ref{Lem:Lee} and \ref{LINLem:G_A<=>G}). \\

In what follows, we shall omit all the constants that appear in various estimates, unless otherwise stated. In doing that we will use the notation $A\lesssim B$ which should be interpreted as $A\leq C B$ where $C$ is a constant. The dependence of $C$ on various other parameters will be clear from the context.

\section{local estimates imply global estimates} \label{Sect:LIN}

In what follows we shall denote by $H^s$ the Sobolev space of all tempered distributions $f$ for which $\langle \xi\rangle^ s\, \widehat{f}(\xi)\in L^2 (\R^n)$, where $\jap:=(1+|\xi|^2)^{1/2}.$ We shall also denote the Schwartz class by $\Ss(\R^n)$ and the class of smooth compactly supported functions by $C^{\infty}_{c} (\R^n)$.\\

We consider the operator
 \begin{equation*}
 T_t f(x)= \int_{\R^n} e^{ix\cdot\xi + it\phi(\xi)} \widehat{f}(\xi)\, d\xi,
 \end{equation*}
defined a-priori for $f\in \mathcal{S}(\R^n),$ where $\phi$ is a function that is positively homogeneous of degree $a$ with $a \geq 1$, and satisfies
\begin{equation} \label{condition of phase}
|\partial^{\alpha} \phi(\xi)|
	\lesssim |\xi|^{a-|\alpha|}, \quad \xi\in\R^n \setminus\{0\}\,\,\, \mathrm{and\,\,\, all\,\,\,  multi-indices}\,\,\, \alpha,
\end{equation}
and
\begin{equation}\label{curvature}
\min_{|\xi|=1} |\nabla\phi(\xi)|>0.
\end{equation}

The main goal of this paper is to establish the following result:

\begin{Th}\label{LINTh:L<=>G for phi in supnorm}
	Let $s_0 >0$, and $T_t$ be defined as above with the phase function satisfying \eqref{condition of phase} and \eqref{curvature}. Then the local bound
    \begin{equation*} 
    \| \sup_{0<t<1}|T_{t}f| \|_{L^2(B(0,1))}
    	\lesssim \|f\|_{H^s(\Rn)}, \quad s>s_0, \quad f \in \Ss(\R^n),
  \end{equation*}
   implies the global bound
    \begin{equation*} 
    \| \sup_{0<t<1}|T_{t}f| \|_{L^2(\Rn)}
    	\lesssim \|f\|_{H^{s}(\Rn)}, \quad s>as_0 , \quad f \in \Ss(\R^n).
\end{equation*}

\end{Th}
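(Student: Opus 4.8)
The plan is to reduce the global estimate to the local one through a combination of rescaling, a Littlewood–Paley type frequency decomposition, and the stationary-phase/curvature hypothesis \eqref{curvature}. The starting observation is that by the Kolmogorov–Seliverstov–Plessner stopping time argument one may replace $\sup_{0<t<1}|T_tf|$ by a linearised operator $T_{t(x)}f(x)$ where $t(\cdot)$ is an arbitrary measurable function with values in $(0,1)$, uniformly in the choice of $t(\cdot)$; this is the content of the linearisation step that Section \ref{Sect:LIN} carries out and that Theorem \ref{LINTh:L<=>G for phi} records. So the real task is: from the local linearised bound on $B(0,1)$ one must produce the global linearised bound on $\R^n$, with the loss $s>s_0 \rightsquigarrow s>as_0$.

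First I would dispose of the low-frequency part. Writing $f = \sum_{j\ge 0} f_j$ with $\widehat{f_j}$ supported in the dyadic annulus $|\xi|\sim 2^j$ (and $|\xi|\lesssim 1$ for $j=0$), the piece $f_0$ is harmless: on that piece $\phi$ is bounded and smooth up to the minor issue at the origin, and a crude argument (e.g. Sobolev embedding in $t$, or simply $\|\sup_t|T_tf_0|\|_{L^2}\lesssim\|f_0\|_{L^2}$ via a trivial kernel estimate) gives the global bound with no derivative loss. The heart of the matter is a single dyadic block $f_j$, $j\ge 1$. Here I would rescale: setting $g(\xi) = \widehat{f_j}(2^j\xi)$, which lives on $|\xi|\sim 1$, and using the homogeneity $\phi(2^j\xi)=2^{aj}\phi(\xi)$, the operator $T_tf_j$ becomes, after the change of variables $x\mapsto 2^{-j}x$, $t\mapsto 2^{-aj}t$, an operator of the same form but now with the time variable ranging over $(0, 2^{aj})$ rather than $(0,1)$, and acting on a function with unit-scale frequency support. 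The derivative loss $2^{js}$ versus $2^{jas_0}$ is exactly the bookkeeping that this rescaling produces: a gain of $2^{aj}$ in the time window costs $a$ times the number of derivatives. This is the mechanism behind Proposition \ref{LINLem:L<=>L_A}, which I would invoke to pass between the unit-time-window estimate and the $A$-time-window estimate (with $A=2^{aj}$), and Proposition \ref{LINLem:G_A<=>G}, which assembles the rescaled blocks back together and is where the extra $\varepsilon$ is spent.

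The remaining ingredient — and the step I expect to be the genuine obstacle — is handling the long time window $(0,A)$ for a fixed unit-frequency function and upgrading the $L^2$ estimate from a ball of radius $1$ to a ball of radius comparable to $A$ (so that, after undoing the rescaling, one recovers all of $\R^n$). This is where the non-degeneracy hypothesis \eqref{curvature}, $\min_{|\xi|=1}|\nabla\phi(\xi)|>0$, must be used: it says the group velocity $\nabla\phi(\xi)$ is bounded below on the frequency support, so a wave packet travelling for time $t\lesssim A$ is displaced by a distance $\sim t$, and one may localise $x$ to $|x|\lesssim A$ up to a rapidly decaying tail controlled by integration by parts (a stationary/non-stationary phase estimate for the kernel $\int e^{i(x-y)\cdot\xi + it\phi(\xi)}\psi(\xi)\,d\xi$, using condition \eqref{condition of phase} to bound all derivatives of $\phi$). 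Then one tiles $B(0,CA)$ by $\sim A^n$ unit balls, applies the local hypothesis on each after translating, and sums using almost-orthogonality in $x$ together with finite propagation speed / the rapidly decaying kernel tails — this is essentially Proposition \ref{Lem:Lee}. Putting these three propositions together in the order: (i) rescale a dyadic block to unit frequency and time window $A=2^{aj}$; (ii) use curvature plus kernel decay to localise and tile, obtaining the $A$-scale estimate from the unit-scale local hypothesis; (iii) undo the rescaling and sum the dyadic pieces, absorbing the summation with an $\varepsilon$ of extra regularity — yields the claimed implication $s>s_0 \Rightarrow s>as_0$.
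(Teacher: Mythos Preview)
Your overall architecture matches the paper's: linearise via Kolmogorov--Seliverstov--Plessner, then chain Propositions~\ref{LINLem:L<=>L_A}, \ref{Lem:Lee} and \ref{LINLem:G_A<=>G}, with the low-frequency piece handled separately and the dyadic summation absorbing an $\varepsilon$. Those parts are fine.

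Your description of the local-to-global step (Proposition~\ref{Lem:Lee}), however, contains a real confusion. You propose to tile $B(0,CA)$ by $\sim A^n$ \emph{unit} balls and apply the original local hypothesis on each. This fails for two reasons. First, after your rescaling the time variable ranges over $(0,A)$, so the unit-ball hypothesis (which only controls $t\in(0,1)$) is not applicable on those unit balls; you cannot simultaneously have unit spatial scale and long time window. Second, the parenthetical claim that controlling $B(0,CA)$ in the rescaled variable ``recovers all of $\R^n$'' after undoing the scaling is false: under $y=2^jx$ the ball $B(0,A)=B(0,2^{aj})$ pulls back to $B(0,2^{(a-1)j})$, not to all of $\R^n$. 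The output $T_{\rho(y)}g(y)$ is \emph{not} spatially localised to any fixed ball, because the input $g$ is not.

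What the paper actually does in Proposition~\ref{Lem:Lee} is tile \emph{all} of $\R^n$ by balls of radius $R^a$ (your $A$), and on each ball $B(x_j,R^a)$ use the non-stationary phase estimate --- your kernel-decay idea, driven by \eqref{curvature}, is exactly right here --- to localise the \emph{input} $f$ to a concentric ball of comparable radius, modulo a rapidly decaying remainder. On that main term one then applies the conclusion of Proposition~\ref{LINLem:L<=>L_A}, i.e.\ the already-rescaled estimate on a ball of radius $R^a$ with time $\le R^a$, which costs $(R^a)^s=R^{as}$; one does not apply the raw $B(0,1)$ hypothesis. The sum over $j$ closes because the input cut-offs $\psi_{j,R}$ have bounded overlap. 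In short: the tiling scale is $R^a$, not $1$, and it is the input that gets spatially localised by finite propagation speed, not the output.
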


It is often more convenient to work instead with an equivalent \lq\lq linearized version\rq\rq\; of  the maximal operator given by
$$T_{t(x)}f(x)
	:= \int_{\Rn} e^{ix \cdot \xi +it(x)\, \phi(\xi)} \widehat{f}(\xi) d\xi, \quad x \in \Rn,$$
defined a-priori on Schwartz class of functions, for any  measurable function $0 \leq  t(x)  \leq  1.$
 Indeed it is well known that the linearized estimates imply sup-norm estimates by the classical Kolmogorov-Seliverstov-Plessner stopping time argument. On the other hand, trivially, for any measurable $t(x)\in [0,1]$ and any $f\in \Ss(\R^n)$ one has that for all $x$,
$$|T_{t(x)} f(x)|\leq \sup_{0<t<1} |T_{t} f(x)|.$$
Therefore, any norm estimate for the expression on the right hand side implies the one for term on the left hand side. Thus, from now on we shall put our efforts in proving the following theorem:

\begin{Th}\label{LINTh:L<=>G for phi}
	Let $s_0 >0$ and $0 \leq t(x)\leq  1$ be a measurable function. Then,  the linearized local bound
    \begin{equation*} 
    \| T_{t(x)}f \|_{L^2(B(0,1))}
    	\lesssim \|f\|_{H^s(\Rn)}, \quad s>s_0, \quad f \in \Ss(\R^n),
  \end{equation*}
    implies the linearized global bound
    \begin{equation*} 
    \| T_{t(x)}f \|_{L^2(\Rn)}
    	\lesssim \|f\|_{H^{s}(\Rn)}, \quad s>as_0, \quad f \in \Ss(\R^n).
\end{equation*}
\end{Th}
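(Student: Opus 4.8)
The plan is to reduce the global estimate to the local one by a decomposition of $\R^n$ into unit cubes and a rescaling argument, exploiting the homogeneity of $\phi$. Concretely, I would first cover $\R^n$ by a lattice of unit balls $\{B(k,1)\}_{k\in\Z^n}$ and write $\|T_{t(x)}f\|_{L^2(\R^n)}^2 = \sum_k \|T_{t(x)}f\|_{L^2(B(k,1))}^2$. A pure translation $x\mapsto x+k$ only changes the phase $x\cdot\xi$ by the harmless linear factor $e^{ik\cdot\xi}$, which can be absorbed into $\widehat{f}$, so the local hypothesis applied on each $B(k,1)$ would control each summand by $\|f_k\|_{H^s}^2$ for a suitable modulation/truncation $f_k$ of $f$; the difficulty is that naively summing these bounds costs an $\ell^2$-summability in $k$ that one does not control without some decay, so translation alone is not enough. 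This is where the rescaling enters.

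The key device is the parabolic-type scaling adapted to the degree-$a$ homogeneity: for $f$ with $\widehat f$ supported in an annulus $|\xi|\sim R$, the substitution $\xi = R\eta$, $x = R^{-1}y$, $t(x) = R^{-a}\tau$ turns $T_{t(x)}f(x)$ into an operator of exactly the same form but with the new time variable living in $[0,R^{-a}]\subset[0,1]$ and the new spatial variable ranging over a ball of radius $R$ rather than radius $1$. Thus a frequency-localized piece of the global estimate at frequency $R$ on $\R^n$ is equivalent to the same estimate on a ball of radius $\sim R$; covering $B(0,R)$ by $\sim R^n$ unit balls and applying the local hypothesis on each gives a bound with a factor $R^{ns}$-type loss, but crucially the Sobolev norm on the right is $\||\xi|^s\widehat f\|_{L^2}\sim R^s\|\widehat f\|_{L^2}$, and tracking how the scaling interacts with the $H^s$ norm is what converts a local threshold $s_0$ into a global threshold $as_0$. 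This is precisely the content of the three intermediate propositions cited in the excerpt: Proposition~\ref{LINLem:L<=>L_A} handles the passage between the local estimate and its annular/rescaled version, Proposition~\ref{Lem:Lee} is presumably the classical oscillatory integral (stationary phase / Fourier restriction type) input that uses the curvature condition \eqref{curvature} to glue the pieces at a fixed frequency, and Proposition~\ref{LINLem:G_A<=>G} reassembles the dyadic frequency pieces into the full global estimate, at the cost of the $\varepsilon$-loss in $s$.

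So my proof of Theorem~\ref{LINTh:L<=>G for phi} would be the bookkeeping that chains these three propositions together. After a Littlewood--Paley decomposition $f = \sum_{j\ge 0} f_j$ with $\widehat{f_j}$ supported where $|\xi|\sim 2^j$, I would write
\begin{equation*}
\|T_{t(x)}f\|_{L^2(\R^n)} \le \sum_{j\ge 0}\|T_{t(x)}f_j\|_{L^2(\R^n)},
\end{equation*}
apply the rescaled/global annular estimate from Proposition~\ref{LINLem:G_A<=>G} (fed by Propositions~\ref{LINLem:L<=>L_A} and~\ref{Lem:Lee}) to each term to get $\|T_{t(x)}f_j\|_{L^2(\R^n)}\lesssim 2^{j(as_0+\varepsilon)}\|f_j\|_{L^2}$, and then sum a geometric-type series after comparing $2^{j(as_0+\varepsilon)}\|f_j\|_{L^2}$ with $\|f_j\|_{H^s}$ for $s > as_0$, choosing $\varepsilon$ small enough that $s - as_0 - \varepsilon > 0$ leaves room for $\ell^1$-summation in $j$. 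The main obstacle in the whole scheme is the step realized by Proposition~\ref{LINLem:G_A<=>G}: controlling the overlap when one reassembles the $\sim R^n$ translated unit-ball contributions into an estimate on $B(0,R)$ without wave-packet orthogonality forces the slightly lossy argument, and this is the only place where sharpness is sacrificed. Everything else is homogeneity bookkeeping and the classical non-stationary/stationary phase estimates guaranteed by \eqref{condition of phase} and \eqref{curvature}.
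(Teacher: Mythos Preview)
Your high-level architecture---rescale via the degree-$a$ homogeneity, chain the three propositions, then sum a Littlewood--Paley decomposition---matches the paper, but the content you assign to the middle proposition is wrong, and that is where the proposal has a genuine gap.

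You propose to cover $B(0,R)$ (or $\Rn$) by $\sim R^n$ unit balls and apply the local hypothesis on each. As you yourself observe, naive summation fails because every right-hand side is the same $\|f\|_{H^s}$, and nothing you write afterwards repairs this: the rescaling converts a frequency-$R$ global problem into a unit-frequency problem on all of $\Rn$ with time $\le R^a$, which is still an infinite sum of unit balls with no decay. The ``$R^{ns}$-type loss'' you mention does not arise from any identifiable estimate and would in any case not produce the required $R^{as}$.

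The paper's mechanism in Proposition~\ref{Lem:Lee} is different. Working at unit frequency ($f\in\SA$, $0\le\rho(x)\le R^a$), one covers $\Rn$ by balls $B(x_j,R^a)$ of radius $R^a$, not $1$. On each such ball, split $f=\psi_{j,R}f+(1-\psi_{j,R})f$ with $\psi_{j,R}$ a bump on a concentric ball of comparable radius. For the near part one applies the output of Proposition~\ref{LINLem:L<=>L_A} with the radius parameter taken to be $R^a$ (legitimate since $\rho\le R^a\le (R^a)^a$), which costs $(R^a)^s=R^{as}$; the sum over $j$ is then controlled because the $\psi_{j,R}$ have finitely overlapping supports, so $\sum_j\|\psi_{j,R}f\|_{L^2}^2\lesssim\|f\|_{L^2}^2$. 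For the far part the input is \emph{non}-stationary phase (Lemma~\ref{LINlem:technic}), not stationary phase or restriction: on $\{1/4<|\xi|<4\}$ one has $\nabla_\xi\bigl[(x-y)\cdot\xi+\rho(x)\phi(\xi)\bigr]=(x-y)+\rho(x)\nabla\phi(\xi)$, and when $|x-y|\gtrsim R^a\ge\rho(x)$ while $|\nabla\phi|$ is bounded on the annulus, this gradient is bounded below by a multiple of $|x-y|$. Condition~\eqref{curvature} enters only to ensure $|\nabla\phi|>0$, not as a curvature hypothesis on level sets. Repeated integration by parts then gives kernel decay $R^{-aN}(1+|x-y|)^{-N}$, and the far piece is dominated by the Hardy--Littlewood maximal function.

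Two smaller corrections. Your scaling has the time range backwards: with $\tau=R^a t$ the new time lives in $[0,R^a]$, not $[0,R^{-a}]$. And the $\varepsilon$-loss in Proposition~\ref{LINLem:G_A<=>G} comes purely from the triangle inequality across dyadic frequency shells, not from any spatial reassembly; the spatial local-to-global step in Proposition~\ref{Lem:Lee} is sharp in the sense that it produces exactly the $R^{as}$ factor with no extra $\varepsilon$.
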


It is absolutely crucial to emphasize that in these estimates and all the forthcoming ones, the constants of the estimates are independent of the measurable functions that are involved in the definition of the operators.\\

The proof  of Theorem \ref{LINTh:L<=>G for phi} is divided in three steps, that we present below as separate results (Propositions \ref{LINLem:L<=>L_A}, \ref{Lem:Lee} and \ref{LINLem:G_A<=>G}).\\

At this point we shall introduce the space $\SA$ consisting of all those functions in the Schwartz class whose frequency is supported in the unit annulus; that is,
$$\SA:=\Big\{f \in \Ss(\Rn): \supp(\widehat{f}) \subset A(1) \Big\},$$
where $A(R):=\{R/2 \leq  |\xi| \leq 2 R\}$, $R>0$.
In the proofs of the next results
it will be crucial to use  the following partition of the unity. We start by choosing a radial function $\chi \in C_c^\infty(\R^n)$ such that $0 < \chi \leq 1$ in $B(0,2)$, $\chi \equiv 1$ in $B(0,1)$ and $\chi \equiv 0 $ in $\R^n \setminus B(0,2)$. Next, we set
\begin{equation}\label{eq:deflambda}
\lambda(\xi)
	:= \chi(\xi) - \chi(2\xi), \quad \xi  \in \Rn,
\end{equation}
which is radial and supported in the annulus $\{1/2 \leq  |\xi| \leq 2\}$ and does not vanish at any isolated point inside its support. Finally, we define
\begin{equation}\label{eq:partunity}
\psi_0(\xi):= \chi(\xi), \qquad
\psi_k(\xi):= \lambda(2^{-k} \xi), \quad k \geq 1.
\end{equation}
Observe that $\supp \psi_k \subset \{2^{k-1} \leq |\xi| \leq 2^{k+1}\}$, $k \geq 1$,
\begin{equation*}
	\supp \psi_k \cap \supp \psi_j = \emptyset, \quad |k-j|>1,
\end{equation*}
and
\begin{equation*}
	\sum_{k \geq 0} \psi_k(\xi) = 1, \quad \xi  \in \Rn.
\end{equation*}
In dealing with the low frequency portions of the oscillatory integral operators $T_{t(x)}f(x)$ the following lemma will be useful.

\begin{Lem}\label{Lem:lowfreq}
Assume that $t(x)$ is a measurable function with $0 \leq t(x)  \leq 1$, $\chi(\xi)\in C_{c}^{\infty}(\R^n)$ is a smooth cut-off function supported in a neighborhood of the origin, and let $\phi$ be a positively homogeneous of degree $a\geq 1$ phase function satisfying \eqref{condition of phase}. Consider the operator
$$R_{t(x)}f(x):= \int_{\Rn} \chi(\xi)\, e^{ix \cdot \xi +it(x)\, \phi(\xi)} \,  \widehat{f}(\xi)\, d\xi, \quad x \in \Rn.$$
Then for $1\leq p\leq \infty$ one has
\begin{equation*}
\Vert R_{t(x)}f\Vert_{L^p(\R^n)}\lesssim \Vert f\Vert_{L^p(\R^n)}.
\end{equation*}
\end{Lem}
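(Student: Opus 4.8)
The plan is to dominate $R_{t(x)}$, uniformly in the measurable function $t(x)$, by an ordinary convolution with one fixed $L^1$ kernel, and then invoke Young's inequality. Concretely, I would write $R_{t(x)}f(x)=\int_{\Rn}K_{t(x)}(x-y)\,f(y)\,\dd y$ (suppressing, here and throughout, inessential dimensional constants), where
\begin{equation*}
K_s(z):=\int_{\Rn}e^{iz\cdot\xi}\,\chi(\xi)\,e^{is\phi(\xi)}\,\dd\xi.
\end{equation*}
Since $|R_{t(x)}f(x)|\le\int_{\Rn}|K_{t(x)}(x-y)|\,|f(y)|\,\dd y$, everything reduces to producing a single function $G\in L^1(\Rn)$, \emph{independent of $s$}, with $|K_s(z)|\le G(z)$ for all $z\in\Rn$ and all $s\in[0,1]$. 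Once this is done, $|R_{t(x)}f(x)|\le (G*|f|)(x)$, and Young's inequality gives $\|R_{t(x)}f\|_{L^p(\Rn)}\lesssim\|f\|_{L^p(\Rn)}$ for every $1\le p\le\infty$, with constant $\|G\|_{L^1}$ independent of $t(x)$.

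To build $G$, I would split $e^{is\phi(\xi)}=1+\big(e^{is\phi(\xi)}-1\big)$. The contribution of the constant $1$ is $\int_{\Rn}e^{iz\cdot\xi}\chi(\xi)\,\dd\xi$, a Schwartz function of $z$ dominated by $C_N\langle z\rangle^{-N}$, uniformly in $s$. For the second piece I would use a Littlewood--Paley decomposition adapted to the origin: telescoping the function $\lambda$ of \eqref{eq:deflambda} gives $\chi(\xi)=\sum_{l\ge0}\lambda(2^l\xi)$ for $\xi\neq0$, where $\lambda(2^l\cdot)$ is supported in the shell $\{2^{-l-1}\le|\xi|\le2^{-l+1}\}$, so that
\begin{equation*}
\chi(\xi)\big(e^{is\phi(\xi)}-1\big)=\sum_{l\ge0}\lambda(2^l\xi)\big(e^{is\phi(\xi)}-1\big).
\end{equation*}
After the dilation $\xi=2^{-l}\eta$ and the homogeneity relation $\phi(2^{-l}\eta)=2^{-la}\phi(\eta)$, the $l$-th term has kernel $2^{-ln}H_{\sigma_l}(2^{-l}z)$, with $\sigma_l:=s\,2^{-la}\in[0,1]$ and $H_\sigma(w):=\int_{\Rn}e^{iw\cdot\eta}\lambda(\eta)\big(e^{i\sigma\phi(\eta)}-1\big)\,\dd\eta$.

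The heart of the matter is that $\lambda(\eta)\big(e^{i\sigma\phi(\eta)}-1\big)$ is a smooth symbol supported in the fixed annulus $\{1/2\le|\eta|\le2\}$, all of whose $\eta$-derivatives are $O_\alpha(\sigma)$ uniformly for $\sigma\in[0,1]$: on that annulus $\phi$ and all its derivatives are bounded (by \eqref{condition of phase}), $|e^{i\sigma\phi(\eta)}-1|\le\sigma|\phi(\eta)|\lesssim\sigma$, and by the Fa\`a di Bruno formula every derivative of $e^{i\sigma\phi}$ carries at least one factor $\sigma$. Integrating by parts $N$ times with $(1-\Delta_\eta)^N$ then gives $|H_\sigma(w)|\lesssim_N\sigma\,\langle w\rangle^{-N}$; undoing the dilation and using $\sigma_l\le 2^{-la}$ yields
\begin{equation*}
\Big|\int_{\Rn}e^{iz\cdot\xi}\,\lambda(2^l\xi)\big(e^{is\phi(\xi)}-1\big)\,\dd\xi\Big|\lesssim_N 2^{-ln}\,2^{-la}\,\langle 2^{-l}z\rangle^{-N},
\end{equation*}
a bound independent of $s$. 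Choosing $N>n$ and summing the geometric series over $l\ge0$ (convergent since $a\ge1>0$) gives $|K_s(z)|\le G(z)$ with $G(z):=C_N\langle z\rangle^{-N}+C_N\sum_{l\ge0}2^{-ln}2^{-la}\langle 2^{-l}z\rangle^{-N}\in L^1(\Rn)$, which finishes the argument.

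I expect the main obstacle to be the interchange of the supremum over $s$ — unavoidable, since $t(x)$ is merely measurable and all constants must be independent of it — with the $L^1$ norm of the kernel; this is exactly what the explicit $s$-independent majorant $G$ circumvents. A secondary point requiring care is the summability of the dyadic pieces localized at the origin: for the untruncated symbol $\chi(\xi)e^{is\phi(\xi)}$ the analogous sum diverges, and it is the subtraction of the constant $1$ — i.e. the fact that $\chi(\xi)\big(e^{is\phi(\xi)}-1\big)$ vanishes to order $a>0$ at the origin, uniformly in $s$ — that produces the decaying factor $2^{-la}$.
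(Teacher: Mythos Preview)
Your argument is correct. Both proofs share the splitting $e^{is\phi}=1+(e^{is\phi}-1)$ and the observation that the singular factor $e^{is\phi}-1$ vanishes to order $a$ at the origin, but the executions diverge from there. The paper treats the cases $a=1$ and $a>1$ separately: for $a=1$ it quotes an external kernel estimate from \cite{DS}, while for $a>1$ it inserts a single cutoff $\rho(\xi/\delta)$, integrates by parts $n+1$ times, and lets $\delta\to 0$, using $a>1$ to kill the boundary-type terms and arriving at the pointwise decay $|K(x,y)|\lesssim\langle x-y\rangle^{-n-1}$, after which Schur's lemma closes the argument. Your route replaces the $\delta$-limit by a full dyadic resolution of the origin and a rescaling to a fixed annulus; the gain of $\sigma_l\le 2^{-la}$ from the vanishing of $e^{i\sigma\phi}-1$ makes the sum over $l$ converge for every $a>0$, so you handle $a=1$ and $a>1$ in one stroke and stay entirely self-contained. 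What you give up is the sharp pointwise bound on the kernel --- your majorant $G$ is only asserted to lie in $L^1$ --- but that is all the lemma requires, and Young's inequality is equivalent to Schur here since the uniform bound depends only on $x-y$. One small point to tidy: the identity $\chi(\xi)=\sum_{l\ge 0}\lambda(2^l\xi)$ uses that $\chi\equiv 1$ near the origin, which the lemma does not literally assume for its generic cutoff; either note that one may reduce to this case, or carry an auxiliary $\tilde\chi\equiv 1$ on $\supp\chi$ through the decomposition.
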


\begin{proof}
Since
$$R_{t(x)}f(x)
=\int_{\R^n} K(x,y)\, f(y)\, dy$$
with
$$K(x,y)
:= \frac{1}{(2\pi)^n}\int_{\Rn} \chi (\xi)\, e^{i(x-y) \cdot \xi +it(x)\, \phi(\xi)}\, d\xi,$$
the result would follow from Schur's lemma, if we manage to show that $$\sup_{x \in \R^n} \Vert K(x, \cdot)\Vert_{L^1(\R^n)}<\infty
\quad \text{and} \quad
\sup_{y \in \R^n} \Vert K(\cdot,y )\Vert_{L^1(\R^n)}<\infty.$$

The proof is divided into two cases. First consider the case when $a$ (the degree of homogeneity of $\phi$) is equal to one. In this case we have for any multi-index $\alpha$ with $|\alpha|=n$ and $|\alpha|=n+1$
$$\sup_{\xi\in\R^n\setminus \{0\}} |\xi|^{|\alpha|-1}\Vert\partial_{\xi}^{\alpha}(e^{it(\cdot)\,  \phi(\xi) } \,\chi (\xi) )\Vert_{L^\infty (\Rn)}<+\infty.$$ Therefore \cite[Lemma 1.17]{DS} (actually its proof) yields that for all $0\leq \varepsilon< 1$ one has $|K(x,y)|\lesssim \langle x-y\rangle ^{-n-\varepsilon}$, where the hidden constant on the right hand side of this estimate doesn't depend on $t(x).$ This kernel estimate obviously implies the Schur-type estimates above.\\

For the case $a>1$ we claim that $|K (x,y)|\lesssim \langle x-y\rangle ^{-n-1},$ where once again the hidden constant on the right hand side of this estimate doesn't depend on $t(x).$ Since $|K(x,y)|\lesssim 1,$ it is enough to show that that $|K(x,y)|\lesssim|x-y|^{-n-1}$. To this end we split the kernel into $K(x,y)= K_{1}(x, y)+K_{2}(x, y)$ where
$$K_{1}(x,y):= \frac{1}{(2\pi)^n}\int_{\Rn} e^{i(x-y) \cdot \xi}\,  \chi(\xi)\, d\xi, $$
and
$$K_{2}(x,y):= \frac{1}{(2\pi)^n}\int_{\Rn} e^{i(x-y) \cdot \xi}\,  \chi (\xi)\, ( e^{it(x)\, \phi(\xi)}-1)\, d\xi.
$$
Since $\chi\in C^{\infty}_{c}(\R^n),$ we have that
$|K_{1}(x,y)|\lesssim | x-y |^{-N},$ for all $N\geq 0$.\\

Given $0<\delta<1$, we introduce a smooth function $\rho$ with $0\leq \rho\leq 1$ such that $\rho(\xi)=1$ when $|\xi|\geq 2$ and $\rho(\xi)=0$ when $|\xi|\leq 1.$ Now setting
$$K_{2,\delta}(x,y):=\frac{1}{(2\pi)^n}\int_{\Rn} e^{i(x-y) \cdot \xi}  \chi(\xi)\,\rho\Big(\frac{\xi}{\delta}\Big)\, ( e^{it(x)\, \phi(\xi)}-1)\, d\xi,$$
Lebesgue's dominated convergence theorem yields that $K_2(x,y)=\lim_{\delta\to 0} K_{2,\delta}(x,y)$.\\

If we integrate by parts $n+1$ times in the integral defining $K_{2,\delta}(x,y)$ we obtain
$$|K_{2,\delta} (x,y)|\lesssim |x-y|^{-n-1}\sum_{|\alpha|+|\beta|+|\gamma|=n+1}\delta^{-|\gamma|}\int_{\R^n} |\partial^{\alpha}( e^{it(x)\, \phi(\xi)}-1)|\,
|\partial^{\beta}\chi(\xi)| |(\partial^\gamma \rho)\Big(\frac{\xi}{\delta}\Big)|\, d\xi.$$

At this point we observe that by the conditions on $t(x)$ and $\phi,$ we have for all multi-indices $\alpha$ $|\partial^{\alpha}( e^{it(x)\, \phi(\xi)}-1)|\lesssim |\xi|^{a-|\alpha|}$ uniformly in $t(x),$ for $\xi$ in the support of $\chi$.
Therefore since $a>1$, if $\gamma=0$ then the corresponding term in the sum above is bounded by
$$|x-y|^{-n-1}\sum_{|\alpha|+|\beta|=n+1}\int_{\supp \chi}  |\xi|^{a-n-1}\, d\xi\lesssim |x-y|^{-n-1}.$$
On the other hand, those terms with $|\gamma| \geq 1$, are bounded by
\begin{align*}
& |x-y|^{-n-1}\sum_{|\alpha|+|\beta|+|\gamma|=n+1}\int_{\delta
\leq |\xi|\leq 2\delta}  |\xi|^{a-|\alpha|}\, \delta^{-|\gamma|}\, d\xi
 \lesssim  |x-y|^{-n-1} \delta^{a-1}.
\end{align*}
Taking the limit as $\delta$ goes to zero, we obtain
\[
	|{K_2(x,y)}|\lesssim |x-y|^{-n-1}.
\]

This establishes the desired kernel estimate, and once again Schur's lemma, enable us to deduce the $L^p$ boundedness of operator $R_{t(x)}$.
\end{proof}

\begin{Rem}
As a matter of fact, the case $a>1$ could also be dealt with, following the same argument as in the case of $a=1.$ However, since the argument presented above, which is similar to that in \cite{Sjolin1}, yields a better decay, we provided a separate proof  in order to maintain a more self-contained presentation.
\end{Rem}

For our forthcoming estimates we would also need the following version of the non-stationary phase lemma, whose proof can be found in \cite[Lemma 3.2]{RLS}.

\begin{Lem}\label{LINlem:technic}
Let $\mathcal{K}\subset \mathbb{R}^n$ be a compact set and  $U\supset \mathcal{K}$ an open set. Assume that $\Phi$ is a real valued function in $C^{\infty}(U)$ such that $|\nabla \Phi|>0$ and
$$|\d^\alpha \Phi| \lesssim |\nabla \Phi|,\qquad
|\d^\alpha (|\nabla \Phi|^2)|\lesssim |\nabla \Phi|^2,$$
for all multi-indices $\alpha$ with $|\alpha|\geq 1$.
Then, for any $F\in C^\infty_c (\mathcal{K})$ and any integer $k\geq 0$,
\begin{equation*}
	 \Big| \int_{\R ^n} F(\xi)\, e^{i\Phi(\xi)}\, d \xi \Big|
     \leq C_{k,n,\mathcal{K}} \sum_{|\alpha| \leq k} \int_\mathcal{K} |\d^{\alpha} F(\xi)| \, |\nabla \Phi(\xi)|^{-k}\, d \xi.
\end{equation*}
\end{Lem}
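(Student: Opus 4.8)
The plan is to run the standard non-stationary-phase integration by parts, while keeping careful track of the ``order'' of the amplitudes that appear, so that the two structural hypotheses on $\Phi$ are exactly what is needed. Since $|\nabla\Phi|>0$ on $U$, the function $g:=|\nabla\Phi|^2$ is smooth and strictly positive there. First I would introduce the first order differential operator
\[
L := \frac{1}{i\,g}\,\nabla\Phi\cdot\nabla,
\]
which satisfies $L e^{i\Phi}=e^{i\Phi}$ by a one-line computation using $g=|\nabla\Phi|^2$. Its formal transpose is $L^{t}h = i\sum_{j=1}^{n}\partial_j\!\left(\frac{\partial_j\Phi}{g}\,h\right)$. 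Because $F\in C_c^\infty(\mathcal{K})$ and $\mathcal{K}$ is a compact subset of the open set $U$, each function $\tfrac{\partial_j\Phi}{g}\,\partial^{\alpha}F$ is smooth with compact support, so integration by parts produces no boundary terms; iterating $k$ times gives
\[
\int_{\R^n} F(\xi)\,e^{i\Phi(\xi)}\,d\xi = \int_{\R^n} \big((L^{t})^{k}F\big)(\xi)\,e^{i\Phi(\xi)}\,d\xi.
\]

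The core of the argument is to show that $(L^{t})^{k}F = \sum_{|\alpha|\le k} a_{\alpha,k}\,\partial^{\alpha}F$, where each coefficient obeys the pointwise bound $|a_{\alpha,k}|\lesssim |\nabla\Phi|^{-k}$ with implied constants depending only on $k$, $n$ and finitely many of the constants in the hypotheses. To organise this I would introduce, for $m\in\Z$, the class $S^{m}$ of smooth functions $b$ on $U$ with $|\partial^{\beta}b|\lesssim g^{m/2}$ for every multi-index $\beta$. The hypotheses $|\partial^{\alpha}\Phi|\lesssim |\nabla\Phi|=g^{1/2}$ and $|\partial^{\alpha}g|\lesssim g$ for $|\alpha|\ge 1$, together with the trivial inequality $|\partial_j\Phi|\le g^{1/2}$, give at once $\partial_j\Phi\in S^{1}$ and $g\in S^{2}$; a routine induction (or the Fa\`{a} di Bruno formula) then yields $g^{-1}\in S^{-2}$, hence $g^{-m}\in S^{-2m}$. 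Moreover the Leibniz rule gives $S^{m_1}\cdot S^{m_2}\subset S^{m_1+m_2}$, and clearly $\partial_j$ maps $S^{m}$ into itself. In particular $\tfrac{\partial_j\Phi}{g}\in S^{-1}$, so from the expansion
\[
L^{t}\big(b\,\partial^{\alpha}F\big) = i\sum_{j=1}^{n}\left[\partial_j\!\Big(\tfrac{\partial_j\Phi}{g}\,b\Big)\,\partial^{\alpha}F + \tfrac{\partial_j\Phi}{g}\,b\,\partial^{\alpha+e_j}F\right]
\]
one sees that $L^{t}$ sends a term $b\,\partial^{\alpha}F$ with $b\in S^{m}$ to a finite sum of terms $b'\,\partial^{\alpha'}F$ with $b'\in S^{m-1}$ and $|\alpha'|\le |\alpha|+1$. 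Starting from $(L^{t})^{0}F = 1\cdot\partial^{0}F$ with $1\in S^{0}$ and inducting on $k$ proves the claim, since membership in $S^{-k}$ is exactly the bound $|a_{\alpha,k}|\lesssim g^{-k/2}=|\nabla\Phi|^{-k}$.

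Finally, combining the two displays with $|e^{i\Phi}|=1$ and the support condition $\supp F\subset\mathcal{K}$,
\[
\Big|\int_{\R^n} F(\xi)\,e^{i\Phi(\xi)}\,d\xi\Big| \le \sum_{|\alpha|\le k}\int_{\mathcal{K}} |a_{\alpha,k}(\xi)|\,|\partial^{\alpha}F(\xi)|\,d\xi \lesssim \sum_{|\alpha|\le k}\int_{\mathcal{K}} |\partial^{\alpha}F(\xi)|\,|\nabla\Phi(\xi)|^{-k}\,d\xi,
\]
which is the asserted estimate. The only genuinely delicate point is the symbol bookkeeping: under these hypotheses differentiation does \emph{not} improve the order (for instance $|\partial_j|\nabla\Phi||$ is controlled only by $|\nabla\Phi|$, not by a constant), so one must verify that each application of $L^{t}$ nevertheless gains exactly one negative power of $|\nabla\Phi|$, and that this gain comes entirely from the explicit factor $\partial_j\Phi/g\in S^{-1}$. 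Everything else is a routine, if slightly tedious, induction.
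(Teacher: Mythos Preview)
Your proof is correct. The paper itself does not give a proof of this lemma; it simply cites \cite[Lemma~3.2]{RLS} and moves on. What you have written is the standard non-stationary-phase argument---integrate by parts with $L=\tfrac{1}{ig}\nabla\Phi\cdot\nabla$ and control the resulting amplitudes---and it is essentially the argument one finds in the cited reference. Your introduction of the symbol classes $S^{m}$ is a clean way to organise the bookkeeping, and your observation that the two structural hypotheses on $\Phi$ are precisely what make $\partial_j\Phi\in S^{1}$, $g\in S^{2}$, and hence $\partial_j\Phi/g\in S^{-1}$, is exactly the point. The only remark I would add is that your final constant depends on $k$, $n$, and finitely many of the implicit constants in the hypotheses on $\Phi$ rather than on $\mathcal{K}$ per se; the dependence on $\mathcal{K}$ in the statement is harmless slack.
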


Now we shall proceed with our chain of propositions.

\begin{Prop}\label{LINLem:L<=>L_A}
For $s>0$, if  for all measurable functions $0 \leq t(x) \leq 1$, the estimate
    \begin{equation}\label{ball of radius 1 sobolev}
    \| T_{t(x)}f  \|_{L^2(B(0,1))}
    	\lesssim \|f\|_{H^s(\Rn)}, \quad f \in \Ss(\R^n),
\end{equation}
 holds, then one has
\begin{equation*}
\| T_{\tau(x)}f  \|_{L^2(B(0,R))}
    	\lesssim R^s \|f\|_{L^2(\Rn)}, \quad f \in \SA, \ R\geq 1,
\end{equation*}
    for all measurable functions $0 \leq \tau(x) \leq  R^a$.
\end{Prop}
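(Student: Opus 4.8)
The plan is to deduce the scaled estimate on the ball $B(0,R)$ from the unit-ball estimate \eqref{ball of radius 1 sobolev} by a parabolic-type rescaling that is adapted to the homogeneity of $\phi$. Given $f \in \SA$ and a measurable $0 \le \tau(x) \le R^a$, I would set $g(y) := f(Ry)$, so that $\widehat{g}(\xi) = R^{-n}\widehat{f}(\xi/R)$ has frequency support in the unit annulus $A(1)$ precisely because $\widehat{f}$ is supported in $A(R)$ — wait, rather: $\widehat{f}$ supported in $A(1)$ gives $\widehat{g}$ supported in $A(1/R)$, so I should instead dilate in the \emph{opposite} direction. The correct choice is to keep $f\in\SA$ and change variables in the \emph{spatial} integral defining the $L^2(B(0,R))$ norm. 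Writing $x = Rz$ with $z \in B(0,1)$, and $\xi = \eta$ fixed, one has
$$T_{\tau(x)}f(Rz) = \int_{\Rn} e^{iRz\cdot\eta + i\tau(Rz)\phi(\eta)}\widehat{f}(\eta)\,d\eta.$$
Now substitute $\eta = \xi/R$ in the frequency integral; since $\widehat f$ is supported in $A(1)$, the new variable $\xi$ ranges over $A(R)$, and by homogeneity $\phi(\xi/R) = R^{-a}\phi(\xi)$. Hence, setting $t(z) := R^{-a}\tau(Rz)$ — which satisfies $0 \le t(z) \le 1$ by the hypothesis $\tau \le R^a$ — we obtain
$$T_{\tau(x)}f(Rz) = R^{-n}\int_{\Rn} e^{iz\cdot\xi + it(z)\phi(\xi)}\widehat{f}(\xi/R)\,d\xi = R^{-n}\, T_{t(z)}F(z),$$
where $F$ is defined by $\widehat{F}(\xi) = R^{-n}\widehat{f}(\xi/R)$, i.e.\ $F(z) = f(z/R)$. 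Note $\widehat F$ is supported in $A(R)$, not in $A(1)$, so $F \notin \SA$ in general, but $F \in \Ss(\Rn)$, which is all that \eqref{ball of radius 1 sobolev} requires.

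Next I would run the computation through the norms. Changing variables $x = Rz$ gives $\|T_{\tau(x)}f\|_{L^2(B(0,R))} = R^{n/2}\|T_{\tau(Rz)}f(R\,\cdot)\|_{L^2(B(0,1))} = R^{n/2}\cdot R^{-n}\|T_{t(z)}F\|_{L^2(B(0,1))} = R^{-n/2}\|T_{t(z)}F\|_{L^2(B(0,1))}$. Applying the hypothesis \eqref{ball of radius 1 sobolev} to $F$ (legitimate since the constant there is uniform over all admissible time functions, in particular over $t(z) = R^{-a}\tau(Rz)$), this is $\lesssim R^{-n/2}\|F\|_{H^s(\Rn)}$. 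It remains to compute $\|F\|_{H^s}$ in terms of $\|f\|_{L^2}$: since $\widehat F$ is supported in $A(R)$ with $R \ge 1$, we have $\langle\xi\rangle^s \sim R^s$ there, so $\|F\|_{H^s(\Rn)} \sim R^s\|F\|_{L^2(\Rn)} = R^s \cdot R^{n/2}\|f\|_{L^2(\Rn)}$. Combining, $\|T_{\tau(x)}f\|_{L^2(B(0,R))} \lesssim R^{-n/2}\cdot R^s \cdot R^{n/2}\|f\|_{L^2(\Rn)} = R^s\|f\|_{L^2(\Rn)}$, which is exactly the claimed bound.

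The one genuinely delicate point — the part I expect to be the main obstacle — is the \textbf{uniformity of the constant} when passing from \eqref{ball of radius 1 sobolev} to its application on $F$: the rescaled time function $t(z) = R^{-a}\tau(Rz)$ depends on $R$ and on $\tau$, and everything hinges on the constant in \eqref{ball of radius 1 sobolev} being independent of the measurable function. The excerpt explicitly flags this (\lq\lq the constants of the estimates are independent of the measurable functions\rq\rq), so the argument is legitimate, but one must state the application carefully and verify that $0 \le t(z) \le 1$ holds \emph{a.e.}, which follows from measurability of $\tau$ and the bound $\tau \le R^a$. A secondary bookkeeping point is the use of $\langle\xi\rangle^s \sim R^s$ on $A(R)$: this requires $R \ge 1$ (so that $\langle\xi\rangle \sim |\xi| \sim R$ rather than $\langle\xi\rangle \sim 1$), which is exactly the hypothesis. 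Everything else is routine change-of-variables and homogeneity bookkeeping; no oscillatory-integral estimates are needed at this step — those come into play in the subsequent propositions.
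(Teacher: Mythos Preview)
Your approach is exactly the paper's: rescale $x \mapsto Rz$, use the homogeneity of $\phi$ to convert $\tau$ into an admissible $t(z)=R^{-a}\tau(Rz)\in[0,1]$, apply the hypothesis to the dilated function, and then use $\langle\xi\rangle\sim R$ on $A(R)$. One bookkeeping slip to fix: the relation $\widehat{F}(\xi)=R^{-n}\widehat{f}(\xi/R)$ corresponds to $F(z)=f(Rz)$ (this is the paper's $f_R$), \emph{not} $F(z)=f(z/R)$; this misidentification produces two compensating power-of-$R$ errors (an extra $R^{-n}$ in your identity $T_{\tau(Rz)}f(Rz)=R^{-n}T_{t(z)}F(z)$, which should read $T_{\tau(Rz)}f(Rz)=T_{t(z)}F(z)$, and an extra $R^{n}$ in $\|F\|_{L^2}=R^{n/2}\|f\|_{L^2}$, which should be $R^{-n/2}\|f\|_{L^2}$), so the final bound $R^s\|f\|_{L^2}$ is correct.
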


\begin{proof}
Let $f \in \SA$, $R\geq 1$ and $0\leq  \tau(x) \leq  R^a$ a measurable function. Take $t(x):=\tau(x)/R^a$. A change of variables yields
    \begin{align*}
    	T_{\tau(x)}f(x)
           & = R^{-n}  \int_{\Rn} e^{i\frac{x}{R}\eta + i t(x)\, \phi(\eta)} \widehat{f}\Big(\frac{\eta}{R}\Big) d\eta.
    \end{align*}
Setting $f_{R}(z):=f(Rz)$ and using \eqref{ball of radius 1 sobolev} it follows that
    \begin{align}\label{eq:Trho2}
    	\| T_{\tau(x)}f \|_{L^2(B(0,R))}
            & = R^{n/2} \| T_{t(Rx)} f_R \|_{L^2(B(0,1))}
             \lesssim R^{n/2} \|f_R\|_{H^s(\Rn)} \nonumber
              \lesssim  R^{s}  \|f\|_{L^2(\Rn)},
    \end{align}
   because $\supp(\widehat{f_R}) \subset A(R)$ and
   $0\leq  t(Rx) =\tau(Rx)/R^a \leq  1$, $x \in \Rn$.
\end{proof}

The following proposition gives us a means of transferring local to global estimates for frequency localised functions.

\begin{Prop}\label{Lem:Lee}
	For $s>0$, if for all measurable functions $0 \leq \tau(x) \leq R^a$, the estimate
\begin{equation}\label{eq:tau}
    \| T_{\tau(x)}f  \|_{L^2(B(0,R))}
 \lesssim R^s \|f\|_{L^2(\Rn)}, \quad f \in \SA, \, R\geq 1,
\end{equation}
 holds, then one has
\begin{equation}\label{eq:rho}
    \| T_{\rho(x)}f \|_{L^2(\Rn)}
    	\lesssim R^{as} \|f\|_{L^2(\Rn)}, \quad f \in \SA,  \, R\geq 1,
\end{equation}
for all measurable functions $0 \leq  \rho(x) \leq R^a$.
\end{Prop}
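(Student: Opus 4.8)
The plan is to pass from the ball $B(0,R)$ to the whole space by tiling $\R^n$ with a lattice of balls of radius $\sim R$ and summing the local estimates, using the almost-orthogonality afforded by the frequency localisation $\supp\widehat f\subset A(1)$. Concretely, I would fix a lattice $\{R\nu:\nu\in\Z^n\}$ and a smooth partition of unity $\{\omega_\nu\}$ subordinated to the balls $B(R\nu, 2R)$, with $\sum_\nu\omega_\nu\equiv 1$ and bounded overlap. Then
\[
\| T_{\rho(x)}f \|_{L^2(\Rn)}^2
= \sum_{\nu} \int_{B(R\nu,2R)} |T_{\rho(x)}f(x)|^2\, \omega_\nu(x)\, dx
\lesssim \sum_{\nu} \| T_{\rho(x)}f \|_{L^2(B(R\nu,2R))}^2,
\]
so the task reduces to estimating each piece over a translated ball and summing. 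A translation $x\mapsto x+R\nu$ turns $T_{\rho(x)}f$ over $B(R\nu,2R)$ into $T_{\rho(\,\cdot\,+R\nu)}g_\nu$ over $B(0,2R)$, where $g_\nu = e^{iR\nu\cdot\xi}\,\widehat f$ on the Fourier side, i.e. $g_\nu = f(\cdot + R\nu)$; note $g_\nu\in\SA$ still and $\|g_\nu\|_{L^2}=\|f\|_{L^2}$, and $\rho(\cdot+R\nu)$ is again a measurable function valued in $[0,R^a]$. Applying hypothesis \eqref{eq:tau} (with radius $2R$, harmlessly absorbing the constant) gives $\| T_{\rho(x)}f \|_{L^2(B(R\nu,2R))}\lesssim R^s\|g_\nu\|_{L^2} = R^s\|f\|_{L^2}$ for each single $\nu$ — but summing these in $\ell^2$ over all $\nu$ diverges, so this crude bound is useless and one must genuinely exploit decay of $T_{\rho(x)}f$ away from the essential support of $f$.

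This is the main obstacle, and the way around it is to localise $f$ as well. Write $f = \sum_{\mu} f_\mu$ where $f_\mu$ is $f$ multiplied by a smooth bump adapted to the cube $Q_\mu$ of side $\sim R$ centred at $R\mu$; after this spatial localisation $f_\mu$ no longer lies in $\SA$, but $\widehat{f_\mu}$ is still essentially supported in a fixed neighbourhood of $A(1)$, and one repairs exact frequency support with a harmless Fourier multiplier $\Psi$ (a finite sum of the $\psi_k$) that equals $1$ on $A(1)$: then $T_{\rho(x)}f_\mu = T_{\rho(x)}(\Psi(D)f_\mu)$ up to a rapidly decaying error handled by the non-stationary phase Lemma \ref{LINlem:technic} (this is exactly the regime where \eqref{curvature} is not needed, only \eqref{condition of phase}). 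The key point is the kernel estimate: for $x$ in a ball $B(R\nu,2R)$ far from $Q_\mu$, i.e. $|\nu-\mu|$ large, the oscillatory integral kernel of $T_{\rho(x)}$ acting on the localised piece $f_\mu$ is $O(\langle R(\nu-\mu)\rangle^{-M})$ for every $M$, again by integration by parts (Lemma \ref{LINlem:technic}), uniformly in the measurable function $\rho$. Hence
\[
\| T_{\rho(x)}f_\mu \|_{L^2(B(R\nu,2R))}
\lesssim R^s \|f_\mu\|_{L^2(\Rn)}\, \langle \nu-\mu\rangle^{-M},
\]
where the factor $R^s$ comes from \eqref{eq:tau} applied to the ball nearest to $Q_\mu$ (the one with $\nu=\mu$), and the rapidly decaying factor comes from the kernel bound for the far balls.

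Finally I would assemble the estimate by Schur's test on the matrix $(\nu,\mu)\mapsto R^s\langle\nu-\mu\rangle^{-M}$ with $M>n$: from the pointwise $\ell^2$-triangle inequality
\[
\| T_{\rho(x)}f \|_{L^2(\Rn)}^2
\lesssim \sum_\nu \Big( \sum_\mu \| T_{\rho(x)}f_\mu \|_{L^2(B(R\nu,2R))} \Big)^2
\lesssim R^{2s} \sum_\nu \Big( \sum_\mu \langle\nu-\mu\rangle^{-M}\|f_\mu\|_{L^2} \Big)^2
\lesssim R^{2s} \sum_\mu \|f_\mu\|_{L^2}^2,
\]
and since the $f_\mu$ have bounded overlap, $\sum_\mu\|f_\mu\|_{L^2}^2\lesssim\|f\|_{L^2}^2$. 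This yields $\| T_{\rho(x)}f \|_{L^2(\Rn)}\lesssim R^s\|f\|_{L^2}$, which is actually \emph{stronger} than the claimed $R^{as}$ when $a\ge 1$; so either the proposition is deliberately stated with a non-optimal power for later convenience, or — more likely — the intended argument does not spatially relocalise and instead only rescales, in which case one genuinely loses the factor $R^{a s - s}$. In that alternative (and presumably the authors') approach one would instead rescale $x\mapsto Rx$, $\xi\mapsto\xi/R$ to convert \eqref{eq:tau} on $B(0,R)$ at "height" $R^a$ into an estimate on $B(0,1)$, iterate/sum over the $R^a$ many unit time-scales, and pick up $R^{a}$ copies of the unit estimate, producing the power $R^{as}$; the decay of the kernel away from unit balls (again via Lemma \ref{LINlem:technic}, using homogeneity of $\phi$ and \eqref{condition of phase}) is what makes the sum over the spatial lattice converge. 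Either way, the crux is the same: turn the single localised estimate into a global one by an almost-orthogonal sum whose off-diagonal terms decay by non-stationary phase, with all constants uniform in the measurable functions $\rho,\tau$.
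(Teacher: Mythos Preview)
Your main approach has a genuine gap in the kernel-decay step. You claim that for $x\in B(R\nu,2R)$ and $y\in Q_\mu$ with $|\nu-\mu|$ large, the kernel
\[
K(x,y)=\int_{\R^n}\theta(\xi)\,e^{i(x-y)\cdot\xi+i\rho(x)\phi(\xi)}\,d\xi
\]
is $O(\langle R(\nu-\mu)\rangle^{-M})$ by integration by parts. But the $\xi$-gradient of the phase is $x-y+\rho(x)\nabla\phi(\xi)$, and since $\rho(x)$ may be as large as $R^a$ while $|\nabla\phi(\xi)|\sim 1$ on the support of $\theta$, the term $\rho(x)\nabla\phi(\xi)$ can have size $\sim R^a$. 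Hence whenever $|x-y|\lesssim R^a$ --- that is, for all $1\ll|\nu-\mu|\lesssim R^{a-1}$ --- the two contributions can cancel and there is no non-stationary phase gain whatsoever. Your Schur matrix then has no decay over a block of $\sim R^{(a-1)n}$ indices $\mu$ for each fixed $\nu$, so the $\ell^1$-sum in $\mu$ diverges; this kills the ``stronger'' bound $R^s$ you thought you had obtained.

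This is exactly why the paper tiles $\R^n$ by balls of radius $R^a$, not $R$: at that scale the separation $|x-y|\geq\kappa R^a$ on the far part genuinely dominates $|\rho(x)\nabla\phi(\xi)|\leq \kappa R^a/4$, and Lemma~\ref{LINlem:technic} then gives the rapid kernel decay. On each near ball $B(x_j,R^a)$ one invokes the hypothesis \eqref{eq:tau} with $R$ replaced by $R^a$ (after a translation, and a dyadic rescaling to keep the spatially localised pieces in $\SA$), and it is precisely this substitution that produces the constant $(R^a)^s=R^{as}$. The loss from $R^s$ to $R^{as}$ is therefore not slack in the statement but a reflection of the propagation scale $\sim R^a$ that $\rho(x)\in[0,R^a]$ imposes on a unit-frequency wave. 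Your alternative speculation (rescale and sum over time slices) is not what the paper does either; the paper's argument is structurally your first plan --- tile, spatially localise $f$, non-stationary phase for the far pieces, bounded overlap for the near pieces --- just carried out at the correct spatial scale $R^a$.
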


Of course the two estimates above are the same when $a=1$, so we can confine ourselves to the cases $a>1$.

\begin{proof}
First observe that \eqref{eq:tau} trivially yields that for $0\leq \tau (x)\leq R$ one has
\begin{equation}\label{LINlocal simple go to R}
    \| T_{\tau(x)}f  \|_{L^2(B(0,R))}
 \lesssim R^s \|f\|_{L^2(\Rn)}, \quad f \in \SA, \, R\geq 1.
\end{equation}

Let $\theta$ be a smooth function that is equal to one on $A(1)$ and supported in $\{1/4<|\xi|<4\}.$
We partition $\R^n$ into finitely overlapping balls $\{B(x_j, R^a)\}_{j \in \Z}$. Let $M:=\sup_{|{\xi}|=1} |\nabla\phi(\xi)|$ and set $\kappa:=4^{a}M$. Then
\begin{align}\label{eq-33}
 \| T_{\rho(x)} f \|_{L^2(\Rn)}^2
&\lesssim \sum_{j \in \Z} \int_{B(x_j,R^a)} \Big| \int_{\Rn}\int_{\Rn} e^{i(x-y)\cdot \xi+i\rho(x)\,  \phi(\xi)}\, \theta(\xi)\, f(y)\,d\xi\, dy \Big|^2 dx  \nonumber \\
	&\lesssim\sum_{j \in \Z} \int_{B(x_j,R^a)}\Big| \int_{\Rn} \int_{\Rn} e^{i(x-y)\cdot \xi+i\rho(x)\,  \phi(\xi)}\, \theta(\xi)\, \psi_{j,R} (y)\, f(y)\,d\xi\, dy \Big|^2\, dx\,  \nonumber \\
	& \qquad  + \sum_{j \in \Z} \int_{B(x_j,R^a)} \Big| \int_{\Rn} \int_{\Rn} e^{i(x-y)\cdot \xi+i\rho(x)\,  \phi(\xi)}\, \theta(\xi)\, \Big(1-\psi_{j,R} (y)\Big)\, f(y)\,d\xi\, dy \Big|^2 dx,
\end{align}
where $\psi_{j,R}$ is a bump function equal to $1$ on the ball $B(x_j, (\kappa+1)R^a)$ and supported in the ball $B(x_j, (\kappa+2)R^a)$.
For the first term above, we decompose $\theta$ as
$$\theta =: \theta_1 +\theta_2 + \theta_3,$$
where $\theta_1$ is supported in $\{1/4\leq |\xi|\leq 1\},$ $\theta_2$ is supported in $\{1/2\leq|\xi|\leq 2\}$ and $\theta_3$ is supported in $\{1\leq |\xi|\leq 4\}$.
For instance, we could take
$\theta_1(\xi):=\lambda(2\xi)$,
$\theta_2(\xi):=\lambda(\xi)$
and $\theta_3(\xi):=\lambda(\xi/2)$,
where $\lambda$ is the function introduced in \eqref{eq:deflambda}.\\

Then we have that
\begin{align*}
& \sum_{j \in \Z} \int_{B(x_j,R^a)}\Big| \int_{\Rn} \int_{\Rn} e^{i(x-y)\cdot \xi+i\rho(x)\,  \phi(\xi)}\, \theta(\xi)\, \psi_{j,R} (y)\, f(y)\,d\xi\, dy \Big|^2\, dx \\
& \qquad \lesssim \sum_{k=1}^{3} \sum_{j \in \Z} \int_{B(x_j,R^a)}\Big| \int_{\Rn} \int_{\Rn} e^{i(x-y)\cdot \xi+i\rho(x)\,  \phi(\xi)}\, \theta_{k} (\xi)\, \psi_{j,R} (y)\, f(y)\,d\xi\, dy \Big|^2\, dx.
\end{align*}
We first analyze the term that contains $\theta_2$ since $\supp \theta_2\subset A(1)$. Using the fact that $0 \leq  \rho(x+x_j) \leq  R^a$, and setting $\widehat{g_{2,j,R}}(\xi):= \theta_{2}(\xi) \widehat{\psi_{j,R} f}(\xi)$, $\tau_h f(x):= f(x+h)$, estimate \eqref{LINlocal simple go to R} yields
\begin{align}\label{eq:theta2}
	& \sum_{j \in \Z}  \int_{B(x_j,R^a)} \Big| \int_{\Rn} \Big\{\int_{\Rn} e^{i(x-y)\cdot \xi+i\rho(x)\,  \phi(\xi)}\, \theta_{2} (\xi)\,d\xi\Big\} \psi_{j,R} (y)\, f(y)\, dy \Big|^2\, dx   \nonumber \\
	& \qquad \qquad = \sum_{j \in \Z}  \int_{B(x_j,R^a)} \Big| \int_{\Rn} e^{ix\cdot \xi+i\rho(x)\,  \phi(\xi)} \widehat{g_{2,j,R}}(\xi) \, d\xi \Big|^2\, dx \nonumber \\
    & \qquad \qquad = \sum_{j \in \Z}  \int_{B(x_j,R^a)} |T_{\rho(x)}g_{2,j,R} (x)|^2\, dx
     = \sum_{j \in \Z}  \int_{B(0,R^a)} |T_{\rho(x+x_j)}(\tau_{x_j} g_{2,j,R})(x)|^2 \, dx \nonumber \\
    & \qquad \qquad \lesssim R^{2as} \sum_{j \in \Z} \Big\|\tau_{x_j} g_{2,j,R} \Big\|^2_{L^2(\Rn)}
    \lesssim R^{2as} \|f\|_{L^2(\Rn)}^2,
\end{align}
where in the last estimate, we have used the translation invariance of the $L^2$ norm,  Plancherel's formula and the finite overlapping property of the dilations of the supports.\\

To deal with the integral containing $\theta_1$, we set $\widehat{g_{1,j,R}}(\xi):= \theta_{1}(\xi/2) \widehat{\psi_{j,R} f}(\xi/2),$  and follow a similar line of calculations as in the case of $\theta_2$, with the difference that here we make changes of variables and use the homogeneity of $\phi$. This leads to
\begin{align}\label{eq:theta1}
	 \sum_{j \in \Z} & \int_{B(x_j,R^a)} \Big| \int_{\Rn} \Big\{\int_{\Rn} e^{i(x-y)\cdot \xi+i\rho(x)\,  \phi(\xi)}\, \theta_{1} (\xi)\,d\xi\Big\} \psi_{j,R} (y)\, f(y)\, dy \Big|^2\, dx \nonumber \\
&= 2^{-n} \sum_{j \in \Z}  \int_{B(0,\frac{R^a}{2})} \Big|\int_{\Rn}  e^{ix\cdot \xi+i\frac{\rho(2x+x_j)}{2^a}\,  \phi(\xi)} e^{i\frac{x_j}{2}\cdot \xi}\, \widehat{g_{1,j,R}}(\xi) \, d\xi \Big|^2 \,dx \nonumber \\
  &   \leq  2^{-n} \sum_{j \in \Z} | \int_{B(0,R^a)}| T_{\frac{\rho(2x+x_j)}{2^a}}(\tau_{\frac{x_j}{2}}\, g_{1,j,R})(x)|^2 \, dx  \nonumber \\
&   \lesssim 2^{-n} R^{2as} \sum_{j \in \Z} \Big\|\tau_{x_j/2}\, g_{1,j,R} \Big\|^2_{L^2(\Rn)}
  \lesssim R^{2as} \|f\|_{L^2(\Rn)}^2,
\end{align}
where we have used the facts that $\supp\widehat{g_{1,j,R}}\subset A(1)$ and $0 \leq  \rho(2x+x_j)/2^a  \leq  R^{a}/2^a < R^a$.\\

To deal with the integral containing $\theta_3$, we set $\widehat{g_{3,j,R}}(\xi):= \theta_{3}(2\xi) \widehat{\psi_{j,R} f}(2\xi),$  and once again use a suitable change of variables and the homogeneity of $\phi.$ This yields
\begin{align} \label{eq:theta3}
& \sum_{j \in \Z}  \int_{B(x_j,R^a)} \Big| \int_{\Rn} \Big\{\int_{\Rn} e^{i(x-y)\cdot \xi+i\rho(x)\,\phi(\xi)}\,\theta_{3} (\xi)\,d\xi\Big\} \psi_{j,R} (y)\, f(y)\, dy \Big|^2 dx   \nonumber \\
& \qquad \qquad = \sum_{j \in \Z} 2^{n} \int_{B(0,2 R^a)} \Big|\int_{\Rn}  e^{ix\cdot \xi+i2^a \rho(\frac{x}{2}+x_j)\,  \phi(\xi)} e^{2ix_j \cdot \xi}\, \widehat{g_{3,j,R}}(\xi) \, d\xi \Big|^2 dx \nonumber \\ & \qquad \qquad  \leq  2^{n} \sum_{j \in \Z}  \int_{B(0,\,(2 R)^{a})}| T_{2^a \rho(\frac{x}{2}+x_j)}(\tau_{2x_j}\, g_{3,j,R})(x)|^2\, dx \nonumber  \\
    & \qquad \qquad \lesssim 2^{n} 2^{2as} R^{2as} \sum_{j \in \Z} \Big\|\tau_{x_j/2}\, g_{3,j,R} \Big\|^2_{L^2(\Rn)}
    \lesssim R^{2as} \|f\|_{L^2(\Rn)}^2,
\end{align}
where we have used the facts that $\supp\widehat{g_{3,j,R}}\subset A(1)$ and $0 \leq 2^a \rho(x/2+ x_j ) \leq (2R)^{a}.$\\

To estimate the term containing $1-\psi_{j,R}$ in \eqref{eq-33}, we set $F:=\theta(\xi)$, $\Phi:= (x~-~y)~\cdot~\xi~+~\rho(x)\,  \phi(\xi)$, and observe that $\nabla_{\xi}\Phi= x-y + \rho(x)\nabla \,  \phi(\xi)$ verifies all the assumptions of Lemma \ref{LINlem:technic}. Indeed, as a first step we have that, for $0\leq  \rho(x) \leq  R^a$, $|y-x_j|\geq (\kappa+1) R^a$ and $x\in B(x_j,R^a)$, the estimate $|x-y|\geq \kappa R^a \geq \kappa \rho(x)$ holds true.\\

Now define $m:=\min_{|{\xi}|=1} |\nabla\phi(\xi)|$, and observe that $m>0$ by the assumption \eqref{curvature} on the phase. We claim that
\begin{equation}\label{nabla_estimate}
	|\nabla_\xi \Phi(\xi)|\geq \max\brkt{{\frac{3|{x-y}|}{4},{3\rho(x)|\nabla \phi(\xi)|}}}
    \geq\max\brkt{{\frac{3|{x-y}|}{4},3m\rho(x)|{\xi}|^{a-1}}},
\end{equation}
where the second lower bounds above follow from  the homogeneity of $\phi$. Therefore it remains to prove the first lower bounds. To this end, we have for $\xi\in\mathrm{supp}\,\theta$ i.e. for $\{1/4<|\xi|<4\},$
\begin{equation}\label{eq:easybound}
	|\nabla \phi(\xi)|\leq M |\xi|^{a-1} \leq  4^{a-1} M =\frac{\kappa}{4}.
\end{equation}
Thus,
$$|\nabla_{\xi}\Phi(\xi)|\ge |x-y|- \rho(x)\,|\nabla \phi(\xi)| > |x-y|- \frac{|x-y|}{\kappa} \frac{\kappa}{4} 
=\frac{3|x-y|}{4}.$$
Now since $|x-y|> \kappa \rho(x),$ we have
\[
	|\nabla_\xi \Phi(\xi)|\geq |x-y|-\rho(x){|\nabla \phi(\xi)|}\geq
    \rho(x){|\nabla \phi(\xi)|}\brkt{\frac{\kappa}{|\nabla \phi(\xi)|}-1}.
\]
Moreover, \eqref{eq:easybound} implies that
\begin{equation}\label{lower bound for grad phi}
	|\nabla_\xi \Phi(\xi)|\geq 3\, \rho(x)\,{|\nabla \phi(\xi)|}\geq 3\, m\, \rho(x)\,|\xi|^{a-1}.
\end{equation}
Trivially, for any $|\alpha|=1$, $|\partial^\alpha_\xi \Phi(\xi)|\leq |\nabla_\xi \Phi(\xi)|$.\\

For $|\alpha|\geq 2$ and $\{1/4<|\xi|<4\},$ \eqref{condition of phase} and \eqref{lower bound for grad phi} imply that
\[
	\begin{split}
	|\partial_\xi^\alpha \Phi(\xi)| &= \rho(x)|\partial^\alpha \phi(\xi)|\leq c_\alpha
    \rho(x)|{\xi}|^{a-1+1-|{\alpha}|}=c_\alpha 4^{|{\alpha}|-1}
    \rho(x)|{\xi}|^{a-1}\\
    &\leq \frac{1}{3m} c_\alpha 4^{|{\alpha}|-1}  |\nabla_\xi \Phi(\xi)|\lesssim |\nabla_\xi \Phi(\xi)| ,
	\end{split}
\]
which verifies the first condition (on the phase) of Lemma \ref{LINlem:technic}.
To check the validity of the second condition on the phase in Lemma \ref{LINlem:technic}, we observe that since
\[
	|\nabla_\xi \Phi(\xi)|^2=|x-y|^2+\rho(x)^2|\nabla\phi(\xi)|^2+2\rho(x)(x-y)\cdot \nabla\phi(\xi),
\]
we have that for any $|\alpha|\geq 1$,
\begin{equation}\label{alpha derivative of gradient}
	\partial^\alpha_\xi |\nabla_\xi \Phi(\xi)|^2=\rho(x)^2 \partial^\alpha_\xi|\nabla\phi(\xi)|^2+2\rho(x)(x-y)\cdot \nabla (\partial^\alpha\phi)(\xi).
\end{equation}
For the second term on the RHS of \eqref{alpha derivative of gradient}, estimate \eqref{lower bound for grad phi} yields
\[
	2|\rho(x)(x-y)\cdot \nabla (\partial^\alpha\phi)(\xi)| \leq 2 c_\alpha \rho(x) |x-y| |\xi|^{a-|\alpha|-1}\leq \frac{2}{3 m} c_\alpha 4^{|\alpha|}   |x-y| |\nabla_\xi \Phi(\xi)|\\
    \lesssim  |\nabla_\xi \Phi(\xi)|^2,
\]
where the last inequality follows from \eqref{nabla_estimate}. For the first term on the RHS of \eqref{alpha derivative of gradient}, Leibniz's rule and equation \eqref{nabla_estimate} yield
\[
	\rho(x)^2|\partial^\alpha_{\xi} |\nabla\phi(\xi)|^2|\lesssim \rho(x)^2|{\xi}|^{2a-2-|\alpha|}\lesssim |\nabla_\xi \Phi(\xi)|^2.
\]
Therefore, Lemma \ref{LINlem:technic} implies that for $0<\rho(x)<R^a$, $|y-x_j|>(\kappa +1) R^a$,  $x\in B(x_j,R^a)$ and all $N\geq 0$
\begin{equation*} 
\Big|\int_{\Rn} e^{i(x-y)\cdot \xi+i\rho(x)\,  \phi(\xi)}\, \theta(\xi)\,d\xi\Big|
	\lesssim R^{-aN}(1+|x-y|)^{-N}.
\end{equation*}
Now if $Mf(x)$ denotes the Hardy-Littlewood maximal function of $f$, then for any $N\geq 0$ one has
\begin{align}\label{LINeq-36}
	& \sum_{j \in \Z} \int_{B(x_j,R^a)} \Big| \int_{\Rn} \Big\{\int_{\Rn} e^{i(x-y)\cdot \xi+i\rho(x)\,  \phi(\xi)}\, \theta(\xi)\,d\xi \Big\} \Big(1- \psi_{j,R} (y)\Big)\, f(y)\, dy \Big|^2 dx \nonumber \\
	& \qquad \qquad \lesssim \sum_{j \in \Z} \int_{B(x_j,R^a)} \Big(\int_{\Rn}  R^{-aN}(1+|x-y|)^{-N} |f(y)|\, dy \Big)^2\, dx \nonumber \\
    & \qquad \qquad \lesssim   R^{-2aN}\sum_{j \in \Z} \int_{B(x_j,R^a)}|Mf(x)|^2\, dx
     \lesssim  R^{-2aN}\Vert Mf\Vert^{2}_{L^2(\R^n)}
    \lesssim  R^{-2aN}\Vert f\Vert^{2}_{L^2(\R^n)}.
\end{align}

Finally putting \eqref{eq:theta2}, \eqref{eq:theta1}, \eqref{eq:theta3} and \eqref{LINeq-36} together we obtain \eqref{eq:rho}.
\end{proof}
The last step in the chain of propositions which establishes Theorem \ref{LINTh:L<=>G for phi} is the following:


\begin{Prop}\label{LINLem:G_A<=>G}
	Let $\eps,s>0$. If   
    \begin{equation} \label{LINglobal double}
    \| T_{\rho(x)}f \|_{L^2(\Rn)}
    	\lesssim R^{as} \|f\|_{L^2(\Rn)}, \quad f \in \SA,  \, R\geq 1,
    \end{equation}
for all measurable functions $0 \leq \rho(x) \leq R^a$, then
    \begin{equation*}
    \| T_{t(x)}f \|_{L^2(\Rn)}
    	\lesssim \|f\|_{H^{as+\eps}(\Rn)}, \quad f \in \Ss(\R^n),
\end{equation*}   
for all measurable functions $t(x)$ with $0 \leq  t(x) \leq  1$.
\end{Prop}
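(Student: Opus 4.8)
The plan is to pass from a generic $f\in\Ss(\Rn)$ to its Littlewood--Paley pieces $f_k$ with $\widehat{f_k}=\psi_k\widehat f$ (using the partition of unity \eqref{eq:partunity}), handle the low-frequency term $f_0$ separately with Lemma \ref{Lem:lowfreq}, and for each $k\geq 1$ rescale so that the frequency support becomes the unit annulus $A(1)$, thereby reducing matters to the hypothesis \eqref{LINglobal double}. Concretely, for $k\geq 1$ write $\widehat{f_k}(\xi)=\widehat{g_k}(2^{-k}\xi)$ with $g_k\in\SA$, perform the change of variables $\eta=2^{-k}\xi$ inside $T_{t(x)}f_k(x)$, and use the homogeneity $\phi(2^k\eta)=2^{ka}\phi(\eta)$; this produces an operator of the same type acting on $g_k$ but with dilated spatial variable $2^k x$ and with the time function replaced by $\rho_k(x):=2^{ka}t(2^{-k}x)$, which satisfies $0\leq\rho_k(x)\leq 2^{ka}$. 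Setting $R:=2^k\geq 1$, the hypothesis \eqref{LINglobal double} applies and, after undoing the spatial dilation (which contributes a harmless power of $2^{kn/2}$ that cancels against the Jacobian), yields $\|T_{t(x)}f_k\|_{L^2(\Rn)}\lesssim 2^{kas}\|g_k\|_{L^2}\lesssim 2^{kas}\|f_k\|_{L^2}$.

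Next I would assemble the pieces. Since $T_{t(x)}$ is linear, $T_{t(x)}f=\sum_{k\geq 0}T_{t(x)}f_k$, and by the triangle inequality together with the bound on $f_0$ from Lemma \ref{Lem:lowfreq} (with $p=2$) and the per-piece bounds just obtained,
\begin{equation*}
\|T_{t(x)}f\|_{L^2(\Rn)}
  \lesssim \|f_0\|_{L^2} + \sum_{k\geq 1} 2^{kas}\|f_k\|_{L^2}.
\end{equation*}
Inserting $2^{kas}=2^{-k\eps}\,2^{k(as+\eps)}$ and applying Cauchy--Schwarz in $k$, the factor $\sum_{k\geq 1}2^{-2k\eps}<\infty$ is summable because $\eps>0$, giving
\begin{equation*}
\sum_{k\geq 1} 2^{kas}\|f_k\|_{L^2}
  \lesssim \Big(\sum_{k\geq 1} 2^{2k(as+\eps)}\|f_k\|_{L^2}^2\Big)^{1/2}
  \lesssim \|f\|_{H^{as+\eps}(\Rn)},
\end{equation*}
where the last step uses $2^{k(as+\eps)}\sim\langle\xi\rangle^{as+\eps}$ on $\supp\psi_k$ together with the almost-orthogonality of the $\psi_k$ and Plancherel. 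Combining with $\|f_0\|_{L^2}\lesssim\|f\|_{L^2}\lesssim\|f\|_{H^{as+\eps}}$ completes the estimate.

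The one genuinely delicate point is the interchange of $T_{t(x)}$ with the infinite Littlewood--Paley sum: one must justify $T_{t(x)}f=\sum_k T_{t(x)}f_k$ pointwise (or in $L^2$) for $f\in\Ss(\Rn)$, which is routine since $\widehat f$ is Schwartz so the series $\sum_k\psi_k\widehat f$ converges absolutely and the tails are negligible; there is no subtlety from $t(x)$ being merely measurable because every estimate invoked is uniform in the time function. This is exactly the step that forces the $\eps$-loss: summing the geometric-type series $\sum 2^{kas}\|f_k\|_{L^2}$ against a fixed Sobolev norm requires an extra decaying factor, and dyadic Cauchy--Schwarz supplies it only at the cost of raising the exponent from $as$ to $as+\eps$. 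Everything else is bookkeeping with changes of variables and homogeneity, and is uniform in $t(x)$ as required.
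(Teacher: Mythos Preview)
Your proposal is correct and follows essentially the same route as the paper: Littlewood--Paley decompose $f$, treat the low-frequency piece via Lemma~\ref{Lem:lowfreq}, and for each high-frequency piece rescale (using the homogeneity of $\phi$) so that the hypothesis \eqref{LINglobal double} applies with $R=2^k$, yielding $\|T_{t(x)}f_k\|_{L^2}\lesssim 2^{kas}\|f_k\|_{L^2}$; then sum in $k$ with an $\eps$-loss. The only cosmetic difference is that the paper first absorbs the Sobolev weight into the operator, defining $\T_{t(x)}f(x)=\int\jap^{-(as+\eps)}e^{ix\cdot\xi+it(x)\phi(\xi)}\widehat f(\xi)\,d\xi$, and proves directly that $\|\T_{t(x)}(\PP_k f)\|_{L^2}\lesssim 2^{-k\eps}\|\PP_k f\|_{L^2}$, summing by the triangle inequality alone; you instead keep the weight on the right-hand side and close with Cauchy--Schwarz in $k$---the two arguments are equivalent.
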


\begin{proof}
Let $f \in\mathcal{S}(\R^n)$ and $0\leq t(x)\leq 1$ measurable.
It is enough to prove that
\begin{equation*}
\| \T_{t(x)} f \|_{L^2(\R^n)}
	\lesssim \| f \|_{L^2(\R^n)},
\end{equation*}
where
\begin{equation*}\label{defn:double T}
\T_{t(x)} f(x)
	:= \int_{\Rn}  \jap^{-(as+\eps)} e^{ix \cdot \xi + it(x)\,  \phi(\xi)} \widehat{f}(\xi) d\xi, \quad x \in \Rn.
\end{equation*}

In association to the partition of the unity defined in \eqref{eq:partunity} we consider the standard Littlewood-Paley decomposition,
$$f=\sum_{k \geq 0} \PP_k f, \qquad
   \widehat{\PP_k f}(\xi)= \psi_k (\xi)\,\widehat{f}(\xi), \ k \geq 0.$$
By the triangle inequality we can write
\begin{equation}\label{eq:triangineq}
\| \T_{t(x)} f \|_{L^2(\R^n)}
\leq  \| \T_{t(x)} (\PP_0 f) \|_{L^2(\R^n)} 
+ \sum_{k \geq 1} \| \T_{t(x)} (\PP_k f) \|_{L^2(\R^n)}.
\end{equation}
First we claim that
\begin{equation}\label{2localized result}
\Vert \T_{t(x)} (\PP_0 f) \Vert_{L^2(\R^n)}
	\lesssim \Vert f\Vert_{L^2(\R^n)}.
\end{equation}
To see this we just observe that the integral kernel of $\T_{t(x)} (\PP_0 f)$ is given by
\begin{equation*}
 \int_{\Rn} \jap^{-(as+\eps)}\, \psi_0 (\xi) \, e^{i(x-y) \cdot \xi + it(x)\,  \phi(\xi) }   \,  d\xi,
\end{equation*}
to which the kernel estimate of Lemma \ref{Lem:lowfreq} is applicable.\\

Second, in order to be able to use assumption \eqref{LINglobal double}, we observe that if  $g \in \Ss(\R^n)$ with $\supp(\widehat{g}) \subset A(R)$. Taking $\rho(x):=R^a t(x)$ and changing variables yield
\begin{align} \label{LINeq:homog}
	\T_{t(x)} g(x)
     & = \int_{\Rn} e^{ix \cdot \xi + i \rho(x)\frac{\phi(\xi)}{R^a}} \widehat{g}(\xi) \jap^{-(as+\eps)}\, d\xi \nonumber \\
    & = R^{n} \int_{\Rn} e^{i R x \cdot\xi + i \rho(x)\, \phi(\xi)} \widehat{g}(R \xi) \langle R \xi \rangle^{-(as+\eps)}\, d\xi.
\end{align}
 Define $\widehat{h_{1/R}}(\eta):= \widehat{g_{1/R}}(\eta) \langle R \eta \rangle^{-(as+\eps)},$ where  $g_{1/R}(z):=g(z/R)$ and observe that $\supp(\widehat{h_{1/R}})=\supp(\widehat{g_{1/R}}) \subset A(1)$.
    Therefore, \eqref{LINeq:homog} and  \eqref{LINglobal double} give us
  \begin{align} \label{LINeq:Problem}
\| \T_{t(x)} g \|_{L^2(\Rn)}
	&= R^{-n/2} \Big(\int_{\R^n} \Big| \int_{\R^n}
    e^{ix\cdot\xi + i\rho(x/R) \phi(\xi)}\langle R\xi\rangle ^{-(as+\eps)} \widehat{g}(R\xi) R^n  \, d\xi \Big|^2\, dx\Big)^{1/2}  \nonumber \\
    & \lesssim R^{as} R^{-n/2}  \|h_{1/R}\|_{L^2(\Rn)}
      = R^{as} R^{-n/2} \|\widehat{h_{1/R}}\|_{L^2(\Rn)} \nonumber\\
    & = R^{as} R^{-n/2} \Big(\int_{\Rn} |\widehat{g_{1/R}}(\eta) |^2 \, R^{-2(as+\eps)} (R^{-2}+ |\eta|^2)^{-(as+\eps)}\, d\eta \Big)^{1/2}   \nonumber \\
    & \lesssim R^{-\eps}
    R^{-n/2}  \|\widehat{g_{1/R}}\|_{L^2(\Rn)}
    = R^{-\eps}\|g\|_{L^2(\Rn)} ,
    \end{align}
   where we have also used the fact that $0 \leq \rho(x/R)= R^a \,t(x/R) \leq  R^a$, $x \in \Rn$.\\
   
Finally, putting together \eqref{eq:triangineq}, \eqref{2localized result} and taking $g=\PP_k f$, $R=2^k$, $k \geq 1$, in \eqref{LINeq:Problem} we conclude 
$$\| \T_{t(x)} f \|_{L^2(\R^n)}
\lesssim \| f \|_{L^2(\R^n)} + \sum_{k \geq 1} 2^{-k \eps} \| \PP_k f\|_{L^2(\R^n)}
\lesssim \| f \|_{L^2(\R^n)}.$$
\end{proof}

We should also mention that when the homogeneity degree $a$ of the phase $\phi$ is lager than $1$, then it is possible to prove Theorem \ref{LINTh:L<=>G for phi in supnorm}  for phases that verify the two conditions $$|\partial^{\alpha}\phi(\xi)|\lesssim |\xi|^{a-|\alpha|}
\quad \text{and} \quad 
|\nabla\phi(\xi)|\gtrsim |\xi|^{a-1}, \quad \text{for} \quad |\alpha|\leq 2
\quad \text{and} \quad |\xi|\neq 0.$$ 
For $a=1,$ one can replace these two conditions by $$|\partial^{\alpha}\phi(\xi)|\lesssim |\xi|^{1-|\alpha|}
\quad \text{for} \quad |\alpha|\leq 2
\quad \text{and} \quad |\xi|\neq 0$$ 
(e.g. the case of the Klein-Gordon equation).\\
 Though, for the sake of clarity and brevity of the exposition, we will not pursue these generalizations here and the details for the modifications of our arguments for inhomogeneous phases will appear elsewhere.


\end{document}